\newtheorem{theorem}{Theorem}[section]
\newtheorem{lemma}[theorem]{Lemma}
\theoremstyle{definition}
\newtheorem*{theorem*}{Theorem}
\newtheorem*{proposition*}{Proposition}
\newtheorem*{lemma*}{Lemma}
\theoremstyle{remark}
\newtheorem*{remark}{Remark}
\numberwithin{equation}{section}
\newcommand{\CC}{\mathbb C}
\newcommand{\FF}{\mathbb F}
\newcommand{\QQ}{\mathbb Q}
\newcommand{\OO}{\mathcal O}
\newcommand{\ZZ}{\mathbb Z}
\newcommand{\RR}{\mathbb R}
\newcommand{\Norm}{\bm N}
\newcommand*{\defeq}{\mathrel{\rlap{%
                     \raisebox{0.3ex}{$\m@th\cdot$}}%
                     \raisebox{-0.3ex}{$\m@th\cdot$}}%
                     =}
\renewcommand{\aa}{\mathfrak a}
\newcommand{\pp}{\mathfrak p}
\renewcommand{\qq}{\mathfrak q}
\newcommand{\mm}{\mathfrak m}
\newcommand{\eps}{\varepsilon}
\begin{document}

\title[Linnik's Theorem for Sato-Tate Laws on CM Elliptic Curves]{Linnik's Theorem for Sato-Tate Laws on \\ Elliptic Curves with Complex Multiplication}

\date{\today}

\author[Evan Chen]{Evan Chen}
\address{Department of Mathematics, Massachusetts Institute of Technology, \mbox{Cambridge, MA 02139}}
\email{\href{mailto:evanchen@mit.edu}{{\tt evanchen@mit.edu}}}

\author[Peter S. Park]{Peter S. Park}
\address{Department of Mathematics, Princeton University, Princeton, NJ 08544}
\email{\href{mailto:pspark@math.princeton.edu}{{\tt pspark@math.princeton.edu}}}

\author[Ashvin A. Swaminathan]{Ashvin A. Swaminathan}
\address{Department of Mathematics, Harvard College, \mbox{Cambridge, MA 02138}}
\email{\href{mailto:aaswaminathan@college.harvard.edu}{{\tt aaswaminathan@college.harvard.edu}}}

\begin{abstract}
	Let $E/\QQ$ be an elliptic curve with complex multiplication (CM), and for each prime $p$ of good reduction, let $a_E(p) = p + 1 - \#E(\FF_p)$ denote the trace of Frobenius. By the Hasse bound, $a_E(p) = 2\sqrt{p} \cos \theta_p$ for a unique $\theta_p \in [0, \pi]$. In this paper, we prove that the least prime $p$ such that $\theta_p \in [\alpha, \beta] \subset [0, \pi]$ satisfies
    \[ p \ll \left(\frac{N_E}{\beta - \alpha}\right)^A, \]
    where $N_E$ is the conductor of $E$ and the implied constant and exponent $A > 2$ are absolute and effectively computable. Our result is an analogue for CM elliptic curves of Linnik's Theorem for arithmetic progressions, which states that the least prime $p \equiv a \pmod q$ for $(a,q)=1$ satisfies $p \ll q^L$ for an absolute constant $L > 0$.
\end{abstract}
\maketitle

\section{Introduction}
Let $E$ be an elliptic curve over $\QQ$, and for each prime $p$, let $\#E(\FF_{p})$ be the number of rational points of $E$ over the finite field $\FF_{p}$. Taking $a_E(p) =p + 1 - \# E(\FF_p)$ to be the trace of Frobenius as usual, we recall the following important result of Hasse, which holds when $E$ has good reduction at $p$:
\begin{equation*}
	\left|a_E(p)\right| \leq 2 \sqrt {p}.
\end{equation*}
It follows that for each prime $p$, there is a unique angle
$\theta_{p} \in [0, \pi]$ (which we call the ``Sato-Tate'' angle) such that $a_p = 2 \sqrt {p} \cos \theta_{p}$.
For a fixed elliptic curve $E$,
it is natural to study the distribution of the angles $\theta_p$ as $p$ ranges across the primes at which $E$ has good reduction. The now-proven Sato-Tate Conjecture provides an asymptotic for this distribution that depends on whether or not $E$ has complex multiplication (CM). While the CM case was established by Hecke, the non-CM case was recently proven in~\cite{tater} by Barnet-Lamb, Geraghty, Harris, and Taylor.
\begin{theorem*}
	[Sato-Tate Conjecture]
	Fix an elliptic curve $E/\QQ$, and let $I = [\alpha, \beta] \subset [0,\pi]$ be a subinterval. Then we have that
    \begin{equation*}
        \lim_{x \to \infty} \frac{\#\{p\leq x:\theta_p\in I\}}{\#\{p \leq x\}}
        = \begin{cases}
        	\displaystyle\int_I\frac{2}{\pi}\sin^2\theta~d\theta
            & \text{ if $E$ is non-CM,} \\[1.5em]
            \dfrac{\delta_I}{2} + \dfrac{\beta - \alpha}{2\pi}
            & \text{ if $E$ is CM}
    	\end{cases}
    \end{equation*}
    where $\delta_I = 1$ if $\pi/2 \in I$ and $\delta_I = 0$ otherwise.
\end{theorem*}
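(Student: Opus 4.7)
The plan is to apply Weyl's equidistribution criterion on $[0,\pi]$. The Sato-Tate measure $\tfrac{2}{\pi}\sin^2\theta\,d\theta$ is characterized by having vanishing moments against the Chebyshev polynomials of the second kind $U_n(\cos\theta) = \frac{\sin((n+1)\theta)}{\sin\theta}$ for all $n \geq 1$, since these form an orthonormal basis; the CM measure $\tfrac12\delta_{\pi/2} + \tfrac{1}{2\pi}\,d\theta$ is characterized analogously by its moments against $\cos(n\theta)$. Thus in either case the theorem reduces to Weyl sum estimates of the shape $\sum_{p \leq x} F(\theta_p) = o(\pi(x))$ where $F$ runs over the appropriate basis of test functions, and this classical route translates each such sum into a statement about the analytic behavior of a suitable $L$-function on the line $\mathrm{Re}(s)=1$.

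For the CM case, let $K$ be the imaginary quadratic CM field. For primes $p$ that split in $K$ as $p = \pi \bar\pi$, the trace satisfies $a_E(p) = \psi(\pi) + \psi(\bar\pi)$ for a Hecke Grossencharacter $\psi$ of infinity type $(1,0)$ and conductor related to $N_E$, so $\psi(\pi)/\sqrt{p} = e^{i\theta_p}$ for an appropriate choice of generator $\pi$. For primes $p$ inert in $K$, one has $a_E(p) = 0$, forcing $\theta_p = \pi/2$ and producing the point mass. The analytic input is Hecke's theorem that for each $n \geq 1$ the $L$-function $L(s,\psi^n)$ admits entire continuation, satisfies a functional equation, and is nonvanishing on $\mathrm{Re}(s) = 1$. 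A standard Wiener--Ikehara or contour-shift argument then yields
\[ \sum_{\substack{p \leq x \\ p \text{ split in } K}} e^{in\theta_p} = o(\pi(x)), \]
which, combined with the Chebotarev density $\tfrac12$ each of split and inert primes, recovers the claimed CM distribution.

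For the non-CM case the required input is the meromorphic continuation of $L(s, \mathrm{Sym}^n E)$ past $\mathrm{Re}(s)=1$ together with nonvanishing on that line, for every $n \geq 1$. The potential automorphy theorem of \cite{tater} identifies $L(s, \mathrm{Sym}^n E)$ (up to finitely many bad local factors) with a cuspidal automorphic $L$-function on $\mathrm{GL}_{n+1}$ over a suitable totally real extension; combined with Brauer-style induction and standard isobaric tensor manipulations, this delivers the desired analytic properties over $\QQ$. Since the normalized trace on symmetric powers is $a_{\mathrm{Sym}^n E}(p)/p^{n/2} = U_n(\cos\theta_p)$, the Weyl criterion applied to $U_n$ then gives $\sum_{p \leq x} U_n(\cos\theta_p) = o(\pi(x))$ for every $n \geq 1$, which is exactly the non-CM Sato-Tate distribution.

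The main obstacle is the non-CM case: establishing the automorphy of $\mathrm{Sym}^n E$ for every $n$ constitutes the deep content of \cite{tater} and requires the full machinery of modularity lifting theorems, potential automorphy over CM fields, and Taylor's $n$-dimensional framework for lifting residual Galois representations. By contrast, the CM case is comparatively classical, as Hecke's Grossencharacter $L$-function theory from the 1920s suffices once the correct $\psi$ has been extracted from $E$ via the theory of complex multiplication.
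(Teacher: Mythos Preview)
The paper does not provide a proof of this statement; it is quoted as background, with the CM case attributed to Hecke and the non-CM case to Barnet-Lamb, Geraghty, Harris, and Taylor~\cite{tater}. There is therefore no ``paper's own proof'' to compare against. Your proposal is a faithful high-level outline of precisely those cited arguments: Weyl's criterion reduces equidistribution to asymptotics for character sums, which in turn follow from analytic continuation and nonvanishing on $\mathrm{Re}(s)=1$ of the relevant $L$-functions---Hecke Gr\"ossencharakter $L$-functions in the CM case, symmetric-power $L$-functions in the non-CM case. As a sketch this is correct, and your identification of the non-CM case as the deep part (requiring the full machinery of~\cite{tater}) while the CM case is classical matches the paper's own attributions.
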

Because the Sato-Tate conjecture provides an equidistribution result for the angles $\theta_p$ in a given subinterval $I \subset [0, \pi]$, it is natural to ask whether one can determine the \emph{least} prime $p$ such that $\theta_p \in I$. In this paper, we address the CM case of this question by proving the following theorem:
\begin{theorem}
	\label{thm:main_simple}
    Let $E/\QQ$ be a CM elliptic curve of conductor $N_E$. There exists a prime $p$ such that $\theta_p \in I$ and
    \[ p \ll \left(\frac{N_E}{\beta - \alpha}\right)^A, \]
    where the implied constant and exponent $A > 2$ are absolute and effectively computable.
\end{theorem}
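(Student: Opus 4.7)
The plan is to follow the blueprint of Linnik's original proof of his theorem on primes in arithmetic progressions, adapted to the setting of Hecke $L$-functions over the CM field $K$. Since $E$ has CM by an order in an imaginary quadratic field $K$, there is a Hecke Grossencharacter $\psi$ of $K$ with infinity type $(1,0)$ such that $L(s, E) = L(s, \psi)$. For each degree-one prime $\mathfrak{p}$ of $K$ lying above a rational prime $p$ of good reduction, we have $\psi(\mathfrak{p}) = \sqrt{p}\, e^{\pm i\theta_p}$, so the task of locating a rational prime $p$ with $\theta_p \in [\alpha,\beta]$ reduces to locating a degree-one prime $\mathfrak{p}$ of $K$ with $\arg \psi(\mathfrak{p}) \in [\alpha,\beta]$. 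I would construct a nonnegative smoothed indicator function $f$ of the arc $[\alpha,\beta]$ (in the style of Beurling--Selberg or Vinogradov) whose Fourier expansion $f(\theta) = \sum_{k \in \ZZ} \hat f(k) e^{ik\theta}$ satisfies $\hat f(0) \asymp (\beta-\alpha)/\pi$ and $|\hat f(k)| \ll \min(\beta-\alpha,\, k^{-2})$, and truncate the harmonics at $|k| \leq K$ with $K \asymp (\beta-\alpha)^{-1}$. The detecting sum $\sum_{N\mathfrak{p} \leq X} f(\arg \psi(\mathfrak{p})) \log N\mathfrak{p}$ then becomes a linear combination of prime-weighted sums $\sum_{\mathfrak{p}} (\psi(\mathfrak{p})/|\psi(\mathfrak{p})|)^k \log N\mathfrak{p}$, each controlled by the Hecke $L$-function $L(s, \psi^k)$.

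Next, I would apply the explicit formula to each $L(s, \psi^k)$ to express the inner sum in terms of zeros $\rho$ of $L(s,\psi^k)$ in the critical strip. The analytic conductor of $L(s, \psi^k)$ grows like $N_E \cdot (|k|+1)^2$, so after summing over $|k| \leq K$ and choosing a cutoff $X$ for primes, one gets a main term of order $(\beta-\alpha)\, X / \log X$ and an error controlled by $\sum_{|k| \leq K} \sum_\rho X^{\operatorname{Re}\rho}$. To reach the polynomial bound $X \ll (N_E/(\beta-\alpha))^A$ rather than an exponential one, the standard Hecke zero-free region is insufficient; one needs a \emph{log-free} zero density estimate of the form
\[ \sum_{|k| \leq K} N(\sigma, T, \psi^k) \ll (K^2 N_E T)^{c(1-\sigma)} \]
for some absolute $c > 0$, uniformly in $\sigma \in [\tfrac12, 1]$. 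This is the analogue for the family $\{\psi^k\}$ of Linnik's density theorem for Dirichlet characters modulo $q$, and I would adapt the approach of Fogels--Weiss (large sieve and power-moment methods over number fields), being careful to track the joint dependence on $N_E$ and $K$.

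The principal obstacle, as in the classical case, is a potential exceptional real zero of some $L(s,\psi^{k_0})$ very near $s=1$. If no such zero exists, the zero-free region plus the log-free density estimate immediately yield the polynomial bound on $X$. If one exists, its contribution to the detecting sum must either be of favorable sign or be absorbed by the main term. I would establish the analogue of the Deuring--Heilbronn repulsion phenomenon for the family $\{L(s,\psi^k)\}_{|k| \leq K}$: an exceptional zero $\beta_0$ with $1 - \beta_0$ very small forces all other zeros in the family to satisfy $\operatorname{Re}\rho \leq 1 - c' \log(1/(1-\beta_0))/\log(K^2 N_E T)$. Inserting this back into the explicit formula and combining with the log-free density bound, the potential exceptional contribution is either outweighed by the main term at $X \asymp (N_E/(\beta-\alpha))^A$ or has the correct sign. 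This makes the detecting sum positive at that scale, producing the desired prime and yielding an effectively computable absolute $A > 2$.
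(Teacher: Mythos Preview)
Your overall strategy matches the paper's proof closely: Beurling--Selberg minorants to linearize the indicator of $[\alpha,\beta]$, the explicit formula applied to each $L(s,\psi^k)$, a standard zero-free region, and Fogels' log-free zero-density estimate for the family $\{\psi^k\}_{|k|\le M}$ to bound the sum over zeros. The paper packages the explicit formula through a Graham--Jutila kernel $K(s)$ and a weight $R(y)$ supported on $[x^{A-2},x^A]$, but this is cosmetics; the analytic content is what you describe.

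There is one substantive point you have misjudged, though it does not invalidate your argument so much as overcomplicate it. You devote the final third of the plan to an exceptional real zero of some $L(s,\psi^{k_0})$ and to a Deuring--Heilbronn repulsion argument for the family. In fact no such zero can occur here. For $k\neq 0$ the character $\psi^k$ has nontrivial infinity type (frequency $k$), hence infinite order, hence is never a real quadratic character; for $k=0$ the character is trivial. In either case the exceptional-zero clause of the standard zero-free region is vacuous, and the paper explicitly dispenses with Siegel zeros on exactly these grounds. (Any residual worry about $\zeta_K(s)=\zeta(s)L(s,\chi_K)$ is moot because $K$ ranges over the nine imaginary quadratic fields of class number one, so $|d_K|$ is absolutely bounded.) Consequently the zero-free region plus the log-free density estimate already suffice, and the Deuring--Heilbronn machinery is unnecessary. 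Recognizing this is precisely what makes the CM case cleaner than both the classical Linnik problem and the non-CM Sato--Tate analogue.

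Two minor calibration remarks. First, the Beurling--Selberg coefficients satisfy $|b_n|\ll \min((\beta-\alpha)/\pi,\,1/|n|)$, not $1/n^2$; the paper only needs the crude bound $|b_n|\le (\beta-\alpha)/\pi + 2/(M+1)$ anyway. Second, the paper truncates the harmonics at $M=x^{1+\varepsilon}$ with $x=\mathbf{N}(\mathfrak m)/(\beta-\alpha)$, somewhat larger than your $K\asymp(\beta-\alpha)^{-1}$, so that the conductor dependence is absorbed uniformly; you should check that your truncation level interacts correctly with the analytic conductors $\asymp N_E(|k|+1)^2$ in the density estimate.
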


Observe that Theorem~\ref{thm:main_simple} is analogous to Linnik's Theorem, which provides an upper bound on the least prime in an arithmetic progression. Specifically, Linnik showed in~\cites{linnik1, linnik2} that the least prime $p \equiv a \pmod q$, for relatively prime integers $a$ and $q$, satisfies $p \ll q^L$ (where the implied constant and the exponent $L > 0$ are absolute and effectively computable). This analogy between Theorem~\ref{thm:main_simple} and Linnik's Theorem is reasonable to expect; indeed, the least prime $p$ with $\theta_p \in I$ should grow inversely with the length of $I$ and should depend in some way on the arithmetic properties of $E$ (such as its conductor), just as the least prime $p$ in an arithmetic progression modulo $q$ should grow with $q$.
\begin{remark} The non-CM analogue of Theorem~\ref{thm:main_simple} was proven by Lemke Oliver and Thorner in~\cite{thorner}. Their bound depends on the number of symmetric-power $L$-functions of $E$ that are known to have analytic continuations and functional equations of the usual type.

Also, it is well-known that for a given elliptic curve $E/\QQ$, the traces of Frobenius $a_E(p)$ are the $p^\mathrm{th}$ Fourier coefficients of a weight $2$ newform of level $N_E$. One can thus formulate this problem in the more general context of newforms of even weight $k\geq 2$ with complex multiplication; the proof is essentially the same as the proof of Theorem~\ref{thm:main_simple}.
\end{remark}
The rest of this paper is organized as follows. Section~\ref{defs} presents an introduction to the analytic theory of CM elliptic curves and $L$-functions associated to Hecke Gr\"ossencharaktere, which are the fundamental tools that we employ in our proof of Theorem~\ref{thm:main_simple}. Then, Section~\ref{proof} employs the tools developed in Section~\ref{defs} to give a detailed proof of Theorem~\ref{thm:main_simple}.

\section{CM Elliptic Curves and Hecke $L$-Functions}\label{defs}

In this section, we provide a brief description of the relevant facts about CM elliptic curves over $\QQ$ and $L$-functions of Hecke Gr\"ossencharaktere that are employed in our proof of Theorem~\ref{thm:main_simple}; a standard reference is \cite{Iwaniec199709}. Note that throughout the rest of the paper, all implied constants are absolute unless otherwise specified.

Let $K/\QQ$ be an algebraic number field, and let $\mm \subset \OO_K$ be a nonzero integral ideal. Let $\xi$ denote a Hecke Gr\"ossencharakter over $K$ of modulus $\mm$ and frequency $k$.
When $K$ is an imaginary quadratic field, every Hecke Gr\"ossencharakter 
can be thought of as the product of a ray-class character $\chi : (\OO_K/\mm)^\ast \to S^1$
with an angle character $\chi_\infty : \CC^\ast \to S^1$, where $S^1 = \{z \in \CC : |z| = 1\}$.
(Here, by frequency of $\xi$, we mean the frequency of $\chi_\infty$. See \cite{iwaniec} for details.)
The Hecke $L$-function $L(s,\xi)$ associated to $\xi$ is defined as the Euler product
\[
	L(s,\xi) =
    \prod_{\pp \subset \OO_K}
    (1-\xi(\pp)\Norm(\pp)^{-s})^{-1},
\]
which converges absolutely for $\sigma > 1$. Hecke showed that the above product can be meromorphically continued to the entire complex plane, giving an $L$-function whose degree equals $[K : \QQ]$. Furthermore, he proved that $L(s, \xi)$ is entire if $\xi$ is nontrivial and that $L(s, \xi)$ has a simple pole at $s = 1$ when $\xi$ is trivial.

As described in~\cite{pid} and~\cite{advanced}, the theory of Hecke Gr\"ossencharaktere is fundamental to the study of CM elliptic curves. Let $E/\QQ$ be an elliptic curve of conductor $N_E$, and suppose that $E$ has complex multiplication by the ring of integers $\OO_K$ of a number field $K/\QQ$ with absolute discriminant $|d_K|$. Recall that in this case, $K$ is necessarily an imaginary quadratic field of class number $1$, so that $\OO_K$ is a principal ideal domain.
For prime ideals $\pp \subset \OO_K$ at which $E$ has good reduction, set
\[ a_E(\pp) =\Norm(\pp) + 1 - \# E(\FF_\pp) \]
where $\FF_\pp \defeq \OO_K / \pp$. Then, the Hasse bound tells us that
\begin{equation*}
	\left|a_E(\pp)\right| \leq 2 \sqrt {\Norm(\pp)}.
\end{equation*}
Thus, for each prime ideal $\pp \subset \OO_K$ at which $E$ has good reduction, we can define $\theta_{\pp} \in [0, \pi]$ such that $a_\pp = 2 \sqrt {\Norm(\pp)} \cos \theta_{\pp}$. Now, consider a totally multiplicative map $\xi_E$ that is defined on unramified prime ideals $\pp \subset \OO_K$ by
\[ \xi_E(\pp) = \exp\left(\pm i \theta_{\pp}\right) \]
where the symbol ``$\pm$'' indicates a sign that depends on $\pp$ and $E$.
It is a well-known result of Weil (see~\cite{weil}) that if the signs $\pm$ are chosen appropriately for each $\pp$, then $\xi_E$ is a Hecke Gr\"ossencharakter over $K$. We note that $\xi_E$ has frequency $1$, and as discussed in~\cite{ken}, the modulus $\mm$ of $\xi_E$ has norm $\Norm(\mm) = N_E/|d_K|$. For $k \in \ZZ \setminus \{ 0 \}$, we denote by $\xi_E^k$ the map defined by $\xi_E^k(\aa) \defeq \xi_E(\aa)^k$ for nonzero ideals $\aa \subset \OO_K$; observe that this is a Hecke Gr\"ossencharakter of modulus $\mm$ and frequency $k$.

Taking the analytic conductor $\qq(s, \xi)$ of an $L$-function $L(s, \xi)$ to be defined as in Equation 5.7 of~\cite{iwaniec}, it is easy to deduce the following useful bound on the analytic conductor of $L(s,\xi_E^k)$:
\begin{equation}\label{anaconda}
\log\qq(s, \xi_E^k) \ll \log \big( (|s|+3) \cdot \Norm(\mm) \cdot k \big).
\end{equation}
We devote the remainder of this section to presenting a few relevant results on the distribution of nontrivial zeros of Hecke $L$-functions; we will apply these results in Section~\ref{proof} to $L(s, \xi_E^k)$. The following lemma, which is adapted from Theorem 5.10 in~\cite{iwaniec}, provides a zero-free region for Hecke $L$-functions over quadratic fields of class number $1$.
\begin{lemma}
   	\label{lem:zfr}
	Let $K$ be a quadratic number field of class number $1$,
    let $\mm \subset \OO_K$ be a nonzero integral ideal,
    and let $\xi$ be a Hecke Gr\"ossencharakter modulo $\mm$.
    Then $L(s, \xi)$ has at most one zero in the region
	\[
        \sigma \ge 1 - \frac{c_1}%
        {\log \qq(it,\xi)}
    \]
	for some absolute constant $c_1 > 0$.
    The exceptional ``Siegel zero'' can only exist if $\xi$ is
    a real quadratic character and is necessarily both real and simple.
\end{lemma}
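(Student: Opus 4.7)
The plan is to adapt the classical ``3-4-1'' proof of the Dirichlet zero-free region to Hecke $L$-functions over the imaginary quadratic field $K$. The cornerstone is the elementary inequality $3 + 4\cos\theta + \cos(2\theta) = 2(1+\cos\theta)^2 \geq 0$, which I would apply simultaneously to the logarithmic derivatives of three $L$-functions: the Dedekind zeta function $\zeta_K(s)$, the Hecke $L$-function $L(s,\xi)$, and $L(s,\xi^2)$. Because $|\xi(\pp)| = 1$ on unramified prime ideals, expanding each logarithmic derivative as a Dirichlet series over prime powers and summing with weights $3$, $4$, $1$ produces a series with non-negative coefficients, yielding the master inequality
\[
-3\frac{\zeta_K'}{\zeta_K}(\sigma) - 4\operatorname{Re}\frac{L'}{L}(\sigma+it,\xi) - \operatorname{Re}\frac{L'}{L}(\sigma+2it,\xi^2) \geq 0
\]
for $\sigma > 1$ and $t \in \RR$.

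Next, I would invoke the Hadamard product expansions of these $L$-functions to bound each logarithmic derivative in terms of the analytic conductor and a sum over non-trivial zeros. Suppose $\rho_0 = \beta_0 + i\gamma_0$ is a non-trivial zero of $L(s,\xi)$ with $\beta_0$ close to $1$. Setting $t = \gamma_0$ and discarding the non-negative contributions from the remaining zeros yields an inequality of the form
\[
-\operatorname{Re}\frac{L'}{L}(\sigma+i\gamma_0,\xi) \leq C\log\qq(i\gamma_0,\xi) - \frac{1}{\sigma-\beta_0}.
\]
Meanwhile, the simple pole of $\zeta_K$ at $s=1$ gives $-\zeta_K'/\zeta_K(\sigma) \leq (\sigma-1)^{-1} + O(\log|d_K|)$, and when $\xi^2$ is non-trivial, $-\operatorname{Re}(L'/L)(\sigma+2i\gamma_0,\xi^2) \leq C\log\qq(2i\gamma_0,\xi^2)$. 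Substituting into the master inequality and optimizing via $\sigma = 1 + \eta/\log\qq(i\gamma_0,\xi)$ for a sufficiently small absolute constant $\eta > 0$ yields, after rearrangement, $\beta_0 \leq 1 - c_1/\log\qq(i\gamma_0,\xi)$ for some absolute $c_1 > 0$.

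The exceptional case occurs precisely when $\xi^2$ is trivial, which forces $\xi$ to be a real-valued finite-order ray-class character (since a non-zero frequency would prevent $\xi^2$ from being trivial). In that case $L(s,\xi^2) = \zeta_K(s)$ contributes an additional $(\sigma-1)^{-1}$ pole term in the master inequality, which can absorb a single zero $\beta_0$ very close to $1$. To show any such exceptional zero is \emph{real} and \emph{simple}, I would run the 3-4-1 argument with two candidate zeros simultaneously: since $\xi$ is real, a non-real zero would come paired with its conjugate, doubling the forbidden contribution and ruling out non-real exceptions, while a repeated or second real zero near $1$ would likewise overwhelm the pole. The principal technical obstacle is the bookkeeping of the $K$-specific constants—the discriminant $|d_K|$, the residue of $\zeta_K$ at $s=1$, and the relation between $\qq(s,\xi)$ and $\qq(s,\xi^2)$—so that all such contributions are absorbed uniformly into $\log\qq(it,\xi)$ and the constant $c_1$ depends on nothing else.
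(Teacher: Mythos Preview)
The paper does not supply its own proof of this lemma; it simply records the statement as an adaptation of Theorem~5.10 in Iwaniec--Kowalski and moves on. Your proposal is precisely the classical de~la~Vall\'ee~Poussin $3$--$4$--$1$ argument, which is in fact the method Iwaniec--Kowalski use to establish that theorem, so your approach is correct and matches the cited source.

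One small remark on your bookkeeping concern: since the paper restricts to imaginary quadratic $K$ of class number~$1$, the discriminant $|d_K|$ is absolutely bounded (there are only nine such fields), so the $O(\log|d_K|)$ terms and the residue of $\zeta_K$ at $s=1$ are genuinely $O(1)$, which makes the absorption into $\log\qq(it,\xi)$ immediate rather than delicate.
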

Note that the region defined in Lemma~\ref{lem:zfr} is free of zeros when the Hecke Gr\"ossencharakter is trivial or has infinite order. Since the character $\xi_E^k$ is trivial if $k = 0$ and has infinite order if $k \neq 0$, we need not consider Siegel zeros in applying Lemma~\ref{lem:zfr} to $L(s, \xi_E^k)$. The next lemma, which is adapted from part (1) of Proposition 5.7 in~\cite{iwaniec}, provides an estimate on the vertical distribution of zeros of Hecke $L$-functions over quadratic fields:
\begin{lemma}
    \label{lem:vdz}
	Retain the setting of Lemma~\ref{lem:zfr}.
    For any $t \ge 2$,
    the number of zeros $\rho$ of $L(s, \xi)$ with $\gamma \in [t-1,t+1]$ is less than
    \[ c_2 \log \qq(it,\xi) \]
    for some absolute constant $c_2$.
\end{lemma}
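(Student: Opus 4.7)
The plan is to adapt the standard argument from the Riemann zeta setting: pair the Hadamard product of $L(s,\xi)$ against a point in the region of absolute convergence, so that the ``large'' contribution of nearby zeros can be bounded by an archimedean (Stirling) estimate.

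First I would invoke Hecke's theorem that the completed $L$-function $\Lambda(s,\xi)$ is an entire function of order $1$. The Hadamard factorization then produces constants $A, B$ (depending on $\xi$) with
\[
\Lambda(s,\xi) \;=\; e^{A+Bs}\prod_{\rho}\left(1-\frac{s}{\rho}\right)e^{s/\rho},
\]
where $\rho$ ranges over the nontrivial zeros of $L(s,\xi)$. Writing $\Lambda(s,\xi) = Q(\xi)^{s/2}\gamma_\infty(s,\xi) L(s,\xi)$ for the appropriate archimedean factor $\gamma_\infty$ attached to a Gr\"ossencharakter of modulus $\mm$ and frequency $k$ over an imaginary quadratic field, and taking the logarithmic derivative, I obtain
\[
\frac{L'}{L}(s,\xi) \;=\; B \;-\; \tfrac{1}{2}\log Q(\xi) \;-\; \frac{\gamma_\infty'}{\gamma_\infty}(s,\xi) \;+\; \sum_\rho\left(\frac{1}{s-\rho}+\frac{1}{\rho}\right).
\]

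Next I would evaluate at $s_0 = 2+it$. Since $\on{Re}(s_0) > 1$ lies in the half-plane of absolute convergence of the Euler product, $L'/L(s_0,\xi)$ is bounded by an absolute constant. Stirling's formula applied to $\gamma_\infty$ gives $\gamma_\infty'/\gamma_\infty(s_0,\xi) \ll \log \qq(it,\xi)$, matching the shape of (\ref{anaconda}). Using the standard identity $\on{Re}(B) = -\sum_\rho \on{Re}(1/\rho)$, the $1/\rho$ terms cancel after taking real parts, yielding
\[
\sum_\rho \on{Re}\frac{1}{s_0-\rho} \;\ll\; \log \qq(it,\xi).
\]
Every summand is nonnegative because $\on{Re}(\rho) < 1 < 2 = \on{Re}(s_0)$, and for any zero $\rho = \beta + i\gamma$ with $\gamma \in [t-1,t+1]$ one has
\[
\on{Re}\frac{1}{s_0-\rho} \;=\; \frac{2-\beta}{(2-\beta)^2 + (t-\gamma)^2} \;\ge\; \frac{1}{5}.
\]
Multiplying through by $5$ yields the claimed bound of Lemma~\ref{lem:vdz}.

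The main obstacle is verifying the Stirling estimate cleanly in this Hecke setting: one must pin down the exact form of the archimedean factor $\gamma_\infty(s,\xi)$ in terms of the infinity type of $K$ and the frequency $k$, and then check that its logarithmic derivative at $s_0=2+it$ really grows only at the rate $O(\log\qq(it,\xi))$ predicted by (\ref{anaconda}). This is routine given the formulas in Chapter 5 of Iwaniec--Kowalski, but is where essentially all of the conductor- and frequency-dependent bookkeeping lives; everything else (Hadamard, real-part inequality, pigeonholing) is formal.
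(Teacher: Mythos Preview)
Your argument is correct and is precisely the classical Hadamard-factorization proof; note that the paper itself does not give a proof of this lemma but simply cites it as an adaptation of Proposition~5.7(1) in Iwaniec--Kowalski, whose proof is exactly the one you outline. One minor point: when $\xi$ is trivial (so $L(s,\xi)=\zeta_K(s)$ has a pole at $s=1$) an extra $-1/(s_0-1)-1/s_0$ appears in the logarithmic-derivative identity, but at $s_0=2+it$ with $t\ge 2$ this is $O(1)$ and is absorbed into the right-hand side.
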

A key input into the proof of Linnik-type theorems is a logarithm-free zero-density estimate. In our proof of Theorem~\ref{thm:main_simple}, we will employ the following estimate, which we have adapted from~\cite{fogels}:
\begin{lemma}
   	\label{lem:lfzde}
    Fix an integer $H \geq 1$,
    an imaginary quadratic number field $K$ of class number $1$,
    and a nonzero integral ideal $\mm \subset \OO_K$. Consider the product
	\[ L(s; \mm, H) = \prod_{\xi} L(s, \xi), \]
	where $\xi$ ranges over all Hecke Gr\"ossencharaktere with modulus $\mm$
    and frequency at most $H$.
    Let $N(\lambda,T)$ denote the number of zeros of $L(s; \mm, H)$ that
    lie in the rectangle
	\[
        1-\lambda < \beta < 1
        \quad\text{and}\quad
        \left\lvert \gamma \right\rvert \le T.
    \]
	Then there exists an absolute constant $c_3 \in (0,1)$ and
    an absolute constant $c_4$ such that if $\lambda \in (0,c_3)$
    and $T \geq \Norm(\mm)(1 + H)$, then
    \[ N(\lambda, T) \leq T^{c_4\lambda}. \]
\end{lemma}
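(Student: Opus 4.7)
The plan is to adapt the detector method of Fogels, which underlies classical log-free zero-density estimates, to the present setting. The strategy has three stages: for each hypothetical zero $\rho$ in the rectangle extract a Dirichlet-polynomial identity; raise it to a high even power and sum over the zeros and over characters $\xi$; and then bound the resulting expression using a large-sieve-type inequality for Hecke Gr\"ossencharaktere of modulus $\mm$ and frequency at most $H$.

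In detail, I would first introduce a mollifier
\[ M_Y(s, \xi) = \sum_{\substack{\aa \subset \OO_K \\ \Norm(\aa) \leq Y}} \mu_K(\aa)\, \xi(\aa)\, \Norm(\aa)^{-s}, \]
where $\mu_K$ denotes the M\"obius function on $\OO_K$ (well-behaved because $K$ has class number $1$), and $Y$ is chosen as a small power of $T$. Then $L(s,\xi) M_Y(s,\xi) = 1 + R_Y(s,\xi)$ with $R_Y$ a Dirichlet series supported on ideals of norm exceeding $Y$. Evaluating at a zero $\rho$ of $L(s,\xi)$ gives $R_Y(\rho, \xi) = -1$, and after smoothing against a suitable Mellin kernel this produces an inequality of the shape
\[ \biggl| \sum_{Y < \Norm(\aa) \leq X} \frac{c_\aa\, \xi(\aa)}{\Norm(\aa)^\rho} \biggr| \gg 1, \]
with explicit bounded coefficients $c_\aa$, up to an error controlled using Lemma~\ref{lem:zfr}, Lemma~\ref{lem:vdz}, and the analytic-conductor bound~(\ref{anaconda}).

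Next, I would raise this inequality to a power $2k$ with $k \asymp 1/\lambda$, and sum over zeros of $L(s;\mm,H)$ in the target rectangle together with the characters $\xi$. To keep things log-free, I would first pass to a well-spaced subset of zeros (one per unit interval in $\gamma$, which by Lemma~\ref{lem:vdz} costs only a factor of $\log T$ that is harmless once absorbed into the exponent). One then applies a large-sieve inequality for Hecke Gr\"ossencharaktere on $K$ of conductor dividing $\mm$ and frequency at most $H$; orthogonality in both the ray-class group and the frequency parameter yields a sieve constant of shape $T + \Norm(\mm)H + X^{2k}$, which under the hypothesis $T \geq \Norm(\mm)(1+H)$ is dominated by $T + X^{2k}$.

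The main obstacle is the calibration of the parameters $k$, $X$, $Y$ so that the resulting estimate collapses to $N(\lambda,T) \ll T^{c_4 \lambda}$ with no stray factor of $\log T$. This is exactly where the Fogels argument departs from cruder density theorems: the Tur\'an-type power-sum (equivalently, high even-moment) mechanism lets the exponent $c_4 \lambda$ of $T$ absorb the polynomial-in-$\log T$ losses that would otherwise appear, at the price of enlarging the absolute constant $c_4$. Choosing $X = T^{\eps}$ and $k$ proportional to $1/\lambda$, then taking $2k$-th roots, gives the claimed bound for $\lambda$ smaller than some absolute $c_3 \in (0,1)$, completing the argument.
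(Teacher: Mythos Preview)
The paper does not actually prove this lemma: it is stated as an adaptation of a result of Fogels and is quoted without proof. So there is no ``paper's own proof'' to compare against; the paper simply imports the estimate as a black box.

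Your sketch is a reasonable outline of how a Fogels-type log-free density theorem is established, and it correctly identifies the decisive ingredient: the Tur\'an power-sum/high-moment mechanism with exponent $k \asymp 1/\lambda$, which is exactly what converts the usual density estimate into a log-free one. A couple of cautions if you intend this to stand as an actual proof rather than a plan. First, Fogels' original argument does not package the orthogonality step as a modern large sieve; it proceeds more directly via mean-value estimates for Dirichlet polynomials twisted by the characters, so if you invoke a ``large-sieve inequality for Hecke Gr\"ossencharaktere with bounded frequency'' you must either supply it or replace it by the elementary mean-value bound that suffices here. Second, the step ``pass to a well-spaced subset of zeros \dots\ costs only a factor of $\log T$ that is harmless once absorbed into the exponent'' is precisely the delicate point in any log-free argument: you cannot simply absorb a stray $\log T$ after the fact, since that would reinstate the logarithm you are trying to avoid. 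The correct handling is to build the well-spacing into the detector inequality before taking $2k$-th powers (or to use Tur\'an's lemma directly), so that no $\log T$ ever appears multiplicatively outside the $T^{c\lambda}$ term. With those two points made rigorous, your outline would indeed reproduce the cited result.
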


\begin{remark}
Similar zero-density estimates were obtained by Koval'\v{c}ik in~\cite{lithuaniastrikesagain}. 
These density estimates are unlikely to produce Linnik-type theorems because they are not logarithm-free. However, they do have applications in studying primes of the form $p=a^2+b^2$ where $|b| < p^{1/4+\epsilon}$ and in producing an analogue of the Bombieri-Vinogradov theorem for primes $p=a^2+b^2$ where $\arg(a+bi)$ lies in a given sector. We thank Professor Jean-Pierre Serre for introducing us to this paper.
\end{remark}

\section{Proof of Theorem~\ref{thm:main_simple}}\label{proof}

In this section, we provide a complete proof of the main result in this paper, namely Theorem~\ref{thm:main_simple}. Let $E/\QQ$ be an elliptic curve of conductor $N_E$ with CM by $\OO_K$, where $K$ is necessarily an imaginary quadratic field of class number 1. Recall from Section~\ref{defs} that we can associate to $E$ a Hecke Gr\"ossencharakter $\xi_E$ over $K$ of modulus $\mm \subset \OO_K$ and frequency $1$. Fix a subinterval $I = [\alpha, \beta] \subset [0,\pi]$ with indicator function denoted by $\chi_I$, and put
\begin{equation}\label{eqx} x = \frac{\Norm(\mm)}{\beta-\alpha}.\end{equation}
Notice that $x$ has a positive lower bound of $1/\pi$, and recall that
\[\frac{\Norm(\mm)}{\beta-\alpha} = \frac{N_E}{|d_K|(\beta-\alpha)} \leq
\frac{N_E}{\beta-\alpha}.  \]
Thus, to prove Theorem~\ref{thm:main_simple}, it suffices to show that if $x$ is sufficiently large,
we can pick a constant $A > 2$ so that there exists a prime $p \ll x^A$ with $\theta_p \in I$.
The method we employ in this section is based on the work of Graham and Jutila
on computing explicit Linnik constants (see~\cites{scan,graham}) as well as that of Kaufman (see~\cite{lith}).

\subsection{Initial Setup of the Proof}
Let $A > 2$ be a sufficiently large absolute constant.
Let $R : (0,\infty) \to \RR$ be supported on $[x^{A-2}, x^A]$.
Consider the sum $S$ defined by
\begin{equation}
	\label{eq:reeq}
	S \defeq \sum_{\substack{\pp \subset \OO_K \\ f_\pp = 1}}
	\frac{\log \Norm(\pp) R(\Norm(\pp))\chi_I(\theta_\pp)}{\Norm(\pp)}.
\end{equation}
Here, the sum is taken over unramified prime ideals $\pp$ (henceforth all sums over primes will implicitly be taken over unramified primes). By $f_\pp$ we mean the \emph{inertial degree} of $\pp$,
which is the degree of $\OO_K / \pp$ as an $\FF_p$-vector space (recall that $\Norm(\pp)= p^{f_\pp}$).
In our case, since $K$ is a quadratic field, we have $f_\pp \in \{1,2\}$.
We will show that $S > 0$.

As in~\cite{graham}, we construct the function $R(y)$ by means of a kernel. For $s \in \CC$, define a kernel\footnote{The kernel, as defined in \S7 of~\cite{graham} is missing a factor of $s$ in the denominator. We have corrected the kernel in our definition of $K(s)$.} $K(s)$ by
\[ K(s) \defeq x^{\frac{A-2}{2} \cdot s} \left( \frac{x^s-1}{s\log x}\right), \]
and take the function $R(y)$ to be given by
\begin{equation}\label{eq:rawr}
	R(y) \defeq \frac{1}{2\pi i}
	\int_{2-i\infty}^{2+i\infty} K(s)^2 y^{-s} \, ds.
\end{equation}
As stated in~\cites{graham,scan}, the function $R(y)$, as defined above, vanishes outside of the interval $[x^{A-2}, x^A]$ and satisfies $R(y) \ll (\log x)^{-1}$ when $y \in [x^{A-2}, x^A]$. We will utilize the following bound on our function $K(s)$:\footnote{This bound, as stated in (22) of~\cite{graham}, has an extraneous minus sign in the exponent of $x$. We have corrected the statement in Lemma~\ref{lem:kernel}.}

\begin{lemma}[Graham,~\cite{graham}]
	\label{lem:kernel}
	Let $B_1 = A - 2$. For $\sigma < 0$, we have that
	\[ \left\lvert K(s) \right\rvert^2
		\le
		x^{B_1\sigma} \min \left( 1, \frac{4}{\left\lvert s \right\rvert^2 (\log x)^2} \right).
	\]
\end{lemma}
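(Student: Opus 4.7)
The plan is to factor the kernel as $K(s) = x^{B_{1}s/2}\cdot \dfrac{x^{s}-1}{s\log x}$, observe that $|x^{B_{1}s/2}|^{2} = x^{B_{1}\sigma}$ produces exactly the prefactor appearing on the right-hand side of the target bound, and then reduce everything to proving
\[
\left|\frac{x^{s}-1}{s\log x}\right|^{2} \le \min\!\left(1,\ \frac{4}{|s|^{2}(\log x)^{2}}\right).
\]
Note that $x>1$ here, since in the setup preceding the lemma we are free to assume $x$ is sufficiently large (the case of bounded $x$ being trivial), so $\log x > 0$.

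To obtain the bound by $1$, I would rewrite the quotient via the integral representation
\[
\frac{x^{s}-1}{s\log x} \;=\; \frac{e^{s\log x}-1}{s\log x} \;=\; \int_{0}^{1} x^{ts}\,dt,
\]
which is valid for all $s\in\CC$ (interpreting the $s=0$ case by continuity). Because $\sigma<0$ and $\log x>0$, we have $|x^{ts}| = x^{t\sigma}\le 1$ for every $t\in[0,1]$, so the triangle inequality for integrals gives $\left|\int_{0}^{1}x^{ts}\,dt\right|\le 1$.

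To obtain the bound by $4/(|s|^{2}(\log x)^{2})$, I would simply apply the triangle inequality in the original expression: for $\sigma<0$,
\[
|x^{s}-1| \;\le\; |x^{s}|+1 \;=\; x^{\sigma}+1 \;<\; 2,
\]
so dividing by $|s\log x|$ and squaring yields the claim. Taking the minimum of the two bounds and multiplying back by the prefactor $x^{B_{1}\sigma}$ completes the proof.

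This is essentially a direct calculation, and there is no substantial obstacle. The only point requiring a moment of care is the sign condition that makes $x^{t\sigma}\le 1$ go the right way, which relies on the implicit assumption $x>1$ inherited from the proof setup.
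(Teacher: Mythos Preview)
Your argument is correct. The paper does not supply its own proof of this lemma; it simply attributes the bound to Graham and uses it as a black box, so there is nothing to compare against beyond noting that your elementary derivation (the integral representation for the bound by $1$, and the triangle inequality $|x^{s}-1|\le x^{\sigma}+1<2$ for the second bound) is exactly the standard verification.
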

\smallskip

\subsection{Estimating $S$} In order to rephrase our problem into one that concerns the Hecke Gr\"ossencharaktere $\xi_E^k$, we use the following lower approximation to $\chi_I$ with symmetric, compactly supported Fourier coefficients:

\begin{lemma}
   	\label{lem:trig}
    Let $I = [\alpha, \beta] \subset [0, \pi]$ be a subinterval, and let $M \in \ZZ_{> 0}$. There exists a trigonometric polynomial
    \[ S_{I,M}(\theta) = \sum_{|n| \leq M} b_n \exp(i n \theta) \]
    satisfying the following properties: For all $\theta \in [0, \pi]$, we have $S_{I,M}(\theta) \leq \chi_I(\theta)$, and for all $n \in \{-M, \dots, M\} \setminus \{0\}$ we have that $b_n = b_{-n}$ and that
   \begin{equation} \label{fouriers}
    \left|b_0 - \frac{\beta - \alpha}{\pi}\right| \leq \frac{2}{M+1} \text{ and } |b_n| \leq \left(\frac{2}{M+1}+\min\left\{\frac{\beta - \alpha}{\pi},\frac{2}{\pi|n|}\right\}\right). \end{equation}
\end{lemma}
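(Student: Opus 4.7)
The plan is to construct $S_{I,M}$ from the classical Selberg/Vaaler trigonometric polynomial minorants on the circle $\RR/2\pi\ZZ$. First, I would extend the problem from $[0,\pi]$ to the full circle by introducing the even extension
\[ f(\theta) := \chi_I(\theta) + \chi_I(-\theta), \]
which is the indicator of $[\alpha,\beta] \cup [-\beta,-\alpha]$ modulo $2\pi$. Since $I \subset [0,\pi]$, the intervals $I$ and $-I$ are essentially disjoint, and $f|_{[0,\pi]} = \chi_I$.

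The key input is Selberg's minorant theorem: for every subinterval $J \subset \RR/2\pi\ZZ$ and every integer $M \geq 1$, there exists a trigonometric polynomial $V_{J,M}$ of degree at most $M$ satisfying $V_{J,M} \le \chi_J$ pointwise and $\int_{-\pi}^{\pi}(\chi_J - V_{J,M})\,\frac{d\theta}{2\pi} = \frac{1}{M+1}$; in particular, its Fourier coefficients obey $|\widehat{V_{J,M}}(n) - \widehat{\chi_J}(n)| \le \frac{1}{M+1}$ for all $n$. Applying this with $J = I$ to obtain $V := V_{I,M}$, I would define
\[ S_{I,M}(\theta) := V(\theta) + V(-\theta). \]
Substituting $-\theta$ into $V \le \chi_I$ gives $V(-\theta) \le \chi_I(-\theta) = \chi_{-I}(\theta)$, whence $S_{I,M}(\theta) \le f(\theta) = \chi_I(\theta)$ for $\theta \in [0,\pi]$. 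By construction $S_{I,M}$ is even, so writing $S_{I,M}(\theta) = \sum_{|n|\le M} b_n e^{in\theta}$ automatically forces $b_n = b_{-n}$.

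Finally, I would compute the Fourier coefficients of $f$ directly:
\[ \widehat{f}(0) = \frac{\beta - \alpha}{\pi}, \qquad \widehat{f}(n) = \frac{\sin(n\beta) - \sin(n\alpha)}{\pi n} \text{ for } n \ne 0, \]
from which $|\widehat{f}(n)| \le \min\{(\beta-\alpha)/\pi,\ 2/(\pi|n|)\}$. Since $b_n = \widehat{V}(n) + \widehat{V}(-n)$ and $\widehat{f}(n) = \widehat{\chi_I}(n) + \widehat{\chi_I}(-n)$, the Selberg coefficient bound applied twice yields $|b_n - \widehat{f}(n)| \le 2/(M+1)$; combining this with the estimates on $|\widehat f(n)|$ gives the inequalities in~(\ref{fouriers}).

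The heavy lifting is deferred to the Selberg/Vaaler minorant construction, which is classical and presents no real obstacle here. The only subtle point is the factor of $2$ in the error bound, which arises precisely because $S_{I,M}$ is built as a sum of two copies of a single Selberg minorant.
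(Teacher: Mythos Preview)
Your approach is correct and is precisely what the paper's one-line proof means by ``modifying the Beurling--Selberg minorant polynomials to be even and periodic modulo $2\pi$'': symmetrize the Selberg minorant $V$ for $I$ on $\RR/2\pi\ZZ$ to $S_{I,M}(\theta)=V(\theta)+V(-\theta)$, and read off the coefficient bounds from the $L^1$-optimality $\tfrac{1}{2\pi}\int(\chi_I-V)\le 1/(M+1)$ together with the explicit Fourier coefficients of $\chi_I+\chi_{-I}$. The only wrinkle is the boundary case $\alpha=0$ or $\beta=\pi$, where $\chi_I(-\theta)$ need not vanish at $\theta=0$ or $\theta=\pi$ and your inequality $S_{I,M}\le\chi_I$ could fail at that single point; this is irrelevant for the application (the angles $\theta_p$ lie strictly in $(0,\pi)$ since $|a_E(p)|<2\sqrt p$ for prime $p$) and is easily patched in general.
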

\begin{proof}
The lemma follows by modifying the Beurling-Selberg minorant polynomials (see~\cite[\S1.2, p.~5-6]{montgomery} for a formal definition of these polynomials) to be even and periodic modulo $2\pi$.
\end{proof}

We are now in a position to estimate the indicator function $\chi_I$ of the interval $I = [\alpha, \beta] \subset [0,\pi]$ in terms of the Hecke Gr\"ossencharaktere $\xi_E^k$. We set $M = x^{1+\eps}$ for an absolute $\eps \in (0, 1/2)$. From Lemma~\ref{lem:trig}, we find that for each unramified prime ideal $\pp \subset \OO_K$,
\[
	\chi_I(\theta_\pp)
    \ge
    \sum_{|k| \le M} b_k \exp(ik\theta_\pp)
	=
	b_0 + \sum_{\substack{|k| \le M \\ k \neq 0}} b_k \xi_E^k(\pp),
\]
where the Fourier coefficients $b_k$ satisfy the conditions specified in (\ref{fouriers}). Applying this lower approximation to $\chi_I$ to (\ref{eq:reeq}), we obtain the following estimate on $S$:
\begin{equation*}
	S = \sum_{f_\pp=1} \frac{\log \Norm(\pp)R(\Norm(\pp))\chi_I(\theta_\pp)}{\Norm(\pp)}
    \ge \sum_{f_\pp=1} \sum_{|k|\le M} b_k \frac{\log \Norm(\pp)R(\Norm(\pp))\xi_E^k(\pp)}{\Norm(\pp)}.
\end{equation*}
Since $R(y)$ is nonzero for only finitely many integers $y$, the sum over primes in the right-hand-side of the above inequality is a finite sum. Thus, we can exchange the order of summation to conclude that
\begin{equation}
	\label{eq:sumest}
	S \geq \sum_{|k| \le M} b_kS_k, 
	\quad \quad S_k = 
    \sum_{f_\pp=1} \frac{\log \Norm(\pp)R(\Norm(\pp))\xi_E^k(\pp)}{\Norm(\pp)}.
	\end{equation}
In what follows, we denote the inner sum on the right-hand-side of \eqref{eq:sumest} by $S_k$.

\subsection{Estimating $S_k$}\label{ell}
To estimate $S_k$ using our knowledge of the Hecke $L$-function $L(s, \xi_E^k)$, we will introduce for every $k \in \{-M, ..., M\}$ an integral $I_k$ defined as follows:
\[
	I_k
	\defeq
	\frac{1}{2\pi i} \int_{2-i\infty}^{2+i\infty} K(s)^2 \left(-\frac{L'}{L}(s+1, \xi_E^k)\right) \, ds
\]
Evaluating the logarithmic derivative of $L(s+1, \xi_E^k)$, we find that
\begin{equation}
	\label{eq:log}
	-\frac{L'}{L}(s+1, \xi_E^k)
    = \sum_{\aa}
    \frac{\Lambda_K(\aa) \xi_E^k(\aa)}{\Norm(\aa)^{s+1}},
\end{equation}
where $\Lambda_K$ is the von Mangoldt function over the number field $K$, defined as
\[
	\Lambda_K(\aa)
	=
	\begin{cases}
		\log \Norm(\pp) & \text{if } \aa = \pp^m, \\
		0 & \text{otherwise}.
	\end{cases}
\]
Substituting (\ref{eq:log}) into the definition of the integral $I_k$ and integrating term by term, we obtain the following series representation of $I_k$:
\begin{equation}\label{eq:seriesrep}
    I_k
    = \sum_{\aa}
    \frac{\Lambda_K(\aa) R(\Norm(\aa)) \xi_E^k(\aa)}{\Norm(\aa)}.
\end{equation}

Recall from (\ref{eq:seriesrep}) that $I_k$ can be expressed as a sum over prime powers, whereas the desired sum $S_k$ is a sum over primes $\pp$ with $f_\pp=1$.
We bound the difference between $I_k$ and $S_k$ as follows.
\begin{lemma}
    For any $k$ we have $|I_k-S_k| = O(x^{-2})$.
\end{lemma}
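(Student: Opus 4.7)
The plan is to expand $I_k$ via the series representation~\eqref{eq:seriesrep}, identify the subsum that reproduces $S_k$ exactly, and then bound the remaining terms using the fact that $R$ is supported on $[x^{A-2}, x^A]$. Since $\Lambda_K(\aa)$ vanishes unless $\aa = \pp^m$ for some prime ideal $\pp$ and integer $m \ge 1$, I rewrite
\[ I_k = \sum_{\pp} \sum_{m \ge 1} \frac{\log \Norm(\pp) \cdot R(\Norm(\pp)^m) \cdot \xi_E^k(\pp)^m}{\Norm(\pp)^m}. \]
The terms with $m = 1$ and $f_\pp = 1$ are exactly $S_k$, so $I_k - S_k$ splits into two parts: the inert-prime contribution ($m = 1$, $f_\pp = 2$, so $\Norm(\pp) = p^2$) and the higher-power contribution ($m \ge 2$).

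I bound each part using the three inputs from the previous subsection: $|R(y)| \ll (\log x)^{-1}$, $R$ vanishes outside $[x^{A-2}, x^A]$, and $|\xi_E^k(\pp)| = 1$. For an inert prime, the support condition forces $p^2 \ge x^{A-2}$, i.e.\ $p \ge x^{(A-2)/2}$, so the elementary tail bound $\sum_{p \ge y} (\log p)/p^2 \ll (\log y)/y$ gives
\[ \Bigl| \sum_{f_\pp = 2} \frac{\log \Norm(\pp) \, R(\Norm(\pp)) \, \xi_E^k(\pp)}{\Norm(\pp)} \Bigr| \ll \frac{1}{\log x} \sum_{p \ge x^{(A-2)/2}} \frac{\log p}{p^2} \ll x^{-(A-2)/2}. \]
For $m \ge 2$, the support condition forces $\Norm(\pp) \ge x^{(A-2)/m}$; since $[K:\QQ]=2$ implies that the number of prime ideals of a given norm is at most $2$, a comparable tail estimate yields
\[ \frac{1}{\log x} \sum_{m \ge 2} \sum_{\Norm(\pp) \ge x^{(A-2)/m}} \frac{\log \Norm(\pp)}{\Norm(\pp)^m} \ll \sum_{m \ge 2} x^{-(A-2)(m-1)/m} \ll x^{-(A-2)/2}, \]
where the $m=2$ term dominates.

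Combining, $|I_k - S_k| \ll x^{-(A-2)/2}$, uniformly in $k$ (the argument only uses $|\xi_E^k| = 1$, never anything finer about the character). Choosing $A$ sufficiently large---in particular $A \ge 6$ suffices, consistent with the standing assumption on $A$---ensures $x^{-(A-2)/2} \le x^{-2}$, giving the claim. There is no substantial analytic obstacle; the work is essentially bookkeeping, and the crucial structural feature is that the support of $R$ is bounded away from small arguments, so the higher prime-power and inert tails decay like a fixed polynomial power of $x$. The only ``obstacle'' worth flagging is that one must commit to $A$ being large enough to meet this $x^{-2}$ target, rather than merely satisfying $A > 2$; this is why the Initial Setup demands $A$ be ``sufficiently large.''
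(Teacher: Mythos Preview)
Your proof is correct and follows essentially the same approach as the paper's: both identify $I_k - S_k$ as the contribution from inert primes and higher prime-power ideals, then bound these terms using the support and size of $R$. Your tail-sum estimate is marginally sharper than the paper's cruder ``number of terms times maximum term'' bound (yielding $A \ge 6$ where the paper requires $A \ge 8$), but this is a cosmetic rather than structural difference.
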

\begin{proof}
    Recall the fact that $R(y) = 0$ for $y \notin [x^{A-2}, x^A]$,
    and moreover that $R(y) \ll (\log x)^{-1}$ for $y \in [x^{A-2}, x^A]$.
    Thus, when $\Norm(\aa)\in [x^{A-2}, x^A]$, we have that
    \[
    \left|\frac{\Lambda_K(\aa) R(\Norm(\aa)) \xi_E^k(\aa)}{\Norm(\aa)}\right|
    \ll \frac{\log(x^A) \cdot (\log x)^{-1}}{x^{A-2}} = \frac{A}{x^{A-2}} .
    \]
    The number $N$ of nonzero terms in the sum (\ref{eq:seriesrep}) corresponding to ideals $\aa = \pp^k$ with $k > 2$ is at most twice the number of prime powers $\le x^A$, so we have that $N \ll x^{A/2}$.
    Moreover, the number of prime ideals with $f_\pp = 2$ is also at most $x^{A/2}$
    (since the norm of such a prime ideal is necessarily a perfect square).
    Thus, the difference between $S_k$ and $I_k$ can be bounded as follows:
    \[ |S_k - I_k| \ll \frac{A}{x^{A-2}} \cdot x^{A/2} \ll x^{-2} \]
    provided that $A \ge 8$.
\end{proof}

On the other hand, we can evaluate the integral $I_k$ by shifting the contour from $\sigma = 2$ to $\sigma = -5/4$.\footnote{In performing an analogous calculation, Kaufman shifts the contour to $\sigma = -3/2$ (see~\cite{lith}), but this is not possible because for a Hecke Gr\"ossencharakter $\xi$ with frequency 1, the logarithmic derivative of $L(s+1,\xi)$ has a pole at $s = -3/2$.}
To this end, we prove the following lemma:
\begin{lemma}\label{cont}
We have that
\[ I_k  = \delta(k) - {\sum_{\rho}}^k K(\rho-1)^2 + O(x^{-2}) \]
where the superscript ``$k$'' on the sum indicates that the sum is taken over nontrivial zeros $\rho$ of $L(s, \xi_E^k)$ and where $\delta(k)$ denotes the Kronecker delta function.
\end{lemma}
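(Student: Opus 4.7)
The plan is to prove the lemma by a contour shift from $\sigma = 2$ to $\sigma = -5/4$. Observe first that $K(s)^2$ is entire (the apparent singularity of $(x^s-1)/(s\log x)$ at $s=0$ is removable, with $K(0)=1$), so in the closed strip $-5/4 \le \sigma \le 2$ the poles of the integrand come solely from $\phi(s) \defeq -L'/L(s+1,\xi_E^k)$. I apply Cauchy's theorem to a rectangular contour with vertical sides at $\sigma = 2$ and $\sigma = -5/4$ and horizontal sides at $t = \pm T_n$, where $\{T_n\}$ is a sequence tending to infinity that avoids the zeros of $L(s,\xi_E^k)$ — such a sequence is guaranteed to exist by Lemma~\ref{lem:vdz}.

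Inside this rectangle, $\phi(s)$ has (i) a simple pole at $s = 0$ if $k = 0$, arising from the pole of $\zeta_K(s+1)$ at $s = 0$ with residue $+1$, contributing $K(0)^2 = 1 = \delta(0)$; and (ii) simple poles at $s = \rho - 1$ for each nontrivial zero $\rho$ of $L(s, \xi_E^k)$, each contributing $-K(\rho-1)^2$ counted with multiplicity, and thus $-{\sum_\rho}^k K(\rho-1)^2$ in total. No poles coming from trivial zeros lie in the strip: for $k \neq 0$, the trivial zeros of $L(s+1,\xi_E^k)$ occur at $s = -1 - |k|/2 - n \le -3/2$, while for $k = 0$ the first trivial zero of $\zeta_K(s+1)$ is at $s = -2$. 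This is precisely the reason for choosing $\sigma = -5/4$ rather than the $\sigma = -3/2$ used in~\cite{lith}, as noted in the footnote.

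For the horizontal segments at $t = \pm T_n$, Lemma~\ref{lem:kernel} (supplemented by the direct estimate $|K(s)|^2 \ll x^{A\sigma}/(|s|^2 \log^2 x)$ valid for $\sigma \ge 0$) provides $|s|^{-2}$ decay, while a standard bound $|L'/L(s+1,\xi_E^k)| \ll \log^2 \qq(it,\xi_E^k)$ along zero-avoiding heights, obtained from the Hadamard product formula combined with Lemmas~\ref{lem:zfr} and~\ref{lem:vdz}, ensures that these contributions vanish as $T_n \to \infty$. For the vertical segment at $\sigma = -5/4$, the functional equation expresses $L'/L(-1/4 + it, \xi_E^k)$ as $-L'/L(5/4 - it, \overline{\xi_E^k}) + O(\log \qq(it,\xi_E^k))$, where the first term is $O(1)$ in the region of absolute convergence and the error absorbs the contribution from the gamma-factor logarithmic derivatives; combined with $|K(-5/4+it)|^2 \le x^{-5(A-2)/4} \cdot 4/(|s|^2 \log^2 x)$ from Lemma~\ref{lem:kernel} and integrated in $t$, this yields a vertical contribution of size $O(x^{-5(A-2)/4} \log \qq(\mm, k))$, which is absorbed into $O(x^{-2})$ provided $A$ is sufficiently large.

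The main technical obstacle is the bound on $|L'/L(s+1,\xi_E^k)|$ along the horizontal segments, which must be established with enough uniformity in $k$ that the resulting $O(x^{-2})$ error is independent of $k$ — this matters because the lemma will ultimately be summed over $|k| \le M = x^{1+\eps}$ in the subsequent step. The argument is standard (partial-fraction expansion from Hadamard factorization, combined with Lemma~\ref{lem:vdz} to choose the heights $T_n$), but the bookkeeping must be carried out carefully so that the implicit constant in $O(x^{-2})$ is truly absolute.
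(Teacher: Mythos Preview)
Your approach is essentially the paper's: the same contour shift from $\sigma=2$ to $\sigma=-5/4$, the same residue accounting, and comparable estimates on the horizontal and vertical legs (the paper cites Proposition~5.7(2) of~\cite{iwaniec} directly rather than invoking the functional equation, but these amount to the same thing).

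There is one small discrepancy in the $k=0$ case. You identify $L(s,\xi_E^0)$ with $\zeta_K(s)$ and conclude that the first trivial zero of $L(s+1,\xi_E^0)$ sits at $s=-2$, outside the contour. But $\xi_E^0$ is the \emph{principal} character modulo $\mm$, so
\[
L(s,\xi_E^0)=\zeta_K(s)\prod_{\pp\mid\mm}\bigl(1-\Norm(\pp)^{-s}\bigr),
\]
and this product vanishes at $s=0$. Consequently $-\tfrac{L'}{L}(s+1,\xi_E^0)$ has a pole at $s=-1$, which \emph{does} lie in the strip $-5/4\le\sigma\le 2$. The paper picks up exactly this residue (its phrase ``trivial zero at $s=-1$'' is loose, but the computed bound $K(-1)^2\ll x^{2-A}(1-x^{-1})^2/(\log x)^2\ll x^{-2}$ is the residue at integrand-variable $s=-1$). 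Your assertion that ``no poles coming from trivial zeros lie in the strip'' for $k=0$ therefore overlooks this contribution. The final statement is unaffected, since the missed residue is already $O(x^{-2})$, but the accounting should include it.
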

\begin{proof}
    Consider the truncated integral $I_k(T)$ defined for $T>0$ by
    \[
        I_k(T)
        \defeq
        \frac{1}{2\pi i} \int_{2-iT}^{2+iT} K(s)^2
        \left(-\frac{L'}{L}(s+1, \xi_E^k)\right) \, ds,
    \]
    where $T$ does not coincide with the ordinate of a zero of $L(s, \xi_E^k)$.
    We want to shift the contour from \[ \sigma = 2 \quad\text{to}\quad \sigma = -5/4. \]
    In doing so, the nontrivial zeros of $L(s+1, \xi_E^k)$,
    which occur when $s+1 = \rho$, contribute residues that sum to $-{\sum_\rho}^k K(\rho - 1)^2$.
    When $k = 0$, we know that $L(s, \xi_E^k)$ has a simple pole when $s + 1 = 1$, which contributes a residue of $\delta(k)$. Moreover, if $k = 0$, then $L(s, \xi_E^k)$ has a trivial zero at $s = -1$, which contributes a residue that is bounded by
    \[ \ll \frac{x^{2-A}(1 - x^{-1})^2}{(\log x)^2} \ll x^{-2} \]
    provided that $A>4$. It is easy to check that the integrand of $I_k$ has no other poles in the range $-5/4 \leq \sigma \leq 2$. Thus, by the Residue Theorem, we have that
    \[
    	I_k(T) = \delta(k) - \sum_{\rho} K(\rho -1)^2 + O(x^{-2}) + \frac{1}{2\pi i}
        \int_{\Gamma_T} K(s)^2\left(-\frac{L'}{L}(s+1, \xi_E^k)\right) \, ds
    \]
    where $\Gamma_T$ is the rectangular path consisting of the three legs
    \[ 	2-iT
        \;\longrightarrow\; -\frac54-iT
        \;\longrightarrow\; -\frac54+iT
        \;\longrightarrow\; 2+iT. \]
	In order to evaluate the above integral, we require a bound on the logarithmic derivative of $L(s,\xi_E^k)$. To this end, one can obtain from (\ref{anaconda}), Lemma~\ref{lem:vdz}, and part (2) of Proposition 5.7 in~\cite{iwaniec} that for $s$ satisfying $-\frac14\le\sigma\le 3$ and $|t| = T$ sufficiently large, we have
		\[ \left\lvert \frac{L'}{L}(s, \xi_E^k) \right\rvert = O\left(\left(\log k(T+3)\right)^2 \right). \]
	Note that the condition of having $T$ sufficiently large can be removed if $\sigma = -5/4$, because $-5/4$ is bounded away from $0$, $1$, and all local parameters of $L(s, \xi_E^k)$ at infinity. This is important  for estimating the integral along the vertical leg $\sigma = -\frac{5}{4}$ of $\Gamma_T$.
	Now, the integral along the first leg (horizontal leg at $t = -T$) is bounded in absolute value by
			\[
				\ll
				\sup_{\substack{-\frac54\le\sigma\le2\\t = -T}}
				\left\lvert x^{(A-2)s}\left(\frac{x^s - 1}{s\log x}\right)^2  \frac{L'}{L}(s+1, \xi_E^k) \right\rvert
				\ll \frac{x^{2(A-2)} (x^2+1)^2}{T^2} \left(\log k(T+3)\right)^2,
			\]
		which vanishes as $T \to \infty$. By an analogous argument, the integral along the third leg (horizontal leg at $t = T$) vanishes as $T \to \infty$. Finally, the integral along the second leg (the vertical leg at $\sigma = -5/4$) is bounded in absolute value by
			\begin{align*}
				&\ll \sup_{\substack{\sigma = -\frac{5}{4}\\|t| \le T}}\left|  x^{(A-2)s}\left(\frac{x^s - 1}{\log x}\right)^2  \right|
				\int_{-T}^{T} \frac{\left( \log k(|t|+3)\right)^2}{ \left|-\frac{5}{4}+it\right|^2} \, dt  \\
				&\ll x^{-\frac{5}{4}(A-2)}\left(\frac{x^{-\frac{5}{4}} + 1}{\log x}\right)^2 \int_{-T}^{T} \frac{\left(\log k(|t|+3)\right)^2}{ \left|-\frac{5}{4}+it\right|^2} \, dt.
			\end{align*}
			Notice that as $T \to \infty$, the integral in the above expression converges by the $p$-test. Therefore, provided that $A > 6$, we have that the above term is $\ll x^{-2}$ in the limit as $T \to \infty$, which proves the lemma.
\end{proof}

\subsection{Estimating the Sum over Zeros}\label{zerosum}

We now combine our Fourier estimate of $S$ with our estimate of $S_k$. By \eqref{eq:sumest}, Lemma~\ref{lem:trig} and the results of Section~\ref{ell}, we have
\begin{align}
    S \geq \sum_{|k| \le M} b_k S_k & = \sum_{|k| \le M} b_k (I_k + O(x^{-2})) \nonumber\\
    & = \sum_{|k| \le M} b_k \left(\delta(k)- {\sum_{\rho}}^k K(\rho-1)^2 + O(x^{-2})\right) \nonumber \\
    & = \frac{\beta - \alpha}{\pi}-\sum_{|k| \le M} b_k {\sum_{\rho}}^k K(\rho-1)^2 + O(x^{-2}) \cdot \sum_{|k| \leq M}b_k \nonumber\\
    & = \frac{\beta - \alpha}{\pi}-\sum_{|k| \le M} b_k {\sum_{\rho}}^k K(\rho-1)^2 + o(x^{-1}), \label{popit}
\end{align}
where we have used our choice of $M = x^{1+\varepsilon}$. We now wish to provide a tight bound on the sum in (\ref{popit}).
We now prove the central lemma in our estimate:
\begin{lemma}
     We have that for sufficiently large $x$,
     \[ \sum_{|k| \le M} {\sum_\rho}^k
     \left\lvert K(\rho-1) \right\rvert^2 < \frac{9}{10}. \]
\end{lemma}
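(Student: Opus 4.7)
\emph{Proof plan.} The plan is to dominate each $|K(\rho-1)|^2$ via Lemma~\ref{lem:kernel} and then combine the three zero-distribution results from Section~\ref{defs}. Writing $\rho = (1-\lambda_\rho)+i\gamma_\rho$, Lemma~\ref{lem:kernel} yields $|K(\rho-1)|^2 \le x^{-(A-2)\lambda_\rho}\min(1,\,4/(\gamma_\rho^2(\log x)^2))$. Since each $\xi_E^k$ with $|k|\le M$ has modulus $\mm$ and frequency at most $M$, the double sum is dominated by the single sum over all nontrivial zeros of $L(s;\mm,M)$, allowing application of Lemma~\ref{lem:lfzde}. I set $T^{\ast} := x^{2+\delta}$ for an absolute $\delta\in(\varepsilon,1)$; then $T^{\ast}\ge\Norm(\mm)(1+M)$ for $x$ large, and split the sum into a ``tail'' part $|\gamma_\rho|>T^{\ast}$ and a ``bulk'' part $|\gamma_\rho|\le T^{\ast}$.

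For the tail, I use the $4/(\gamma_\rho^2(\log x)^2)$ piece of the kernel bound together with a dyadic decomposition $|\gamma_\rho|\in[2^jT^{\ast},\,2^{j+1}T^{\ast}]$; by Lemma~\ref{lem:vdz} and \eqref{anaconda}, each dyadic band contains $\ll M\cdot 2^jT^{\ast}\log(2^jT^{\ast}x^2)$ zeros across all $|k|\le M$, so after summing over $j$ the tail contributes $\ll M\log(T^{\ast}x)/(T^{\ast}\log x) = o(1)$, using $T^{\ast}\gg M = x^{1+\varepsilon}$. For the bulk, Lemma~\ref{lem:zfr} (with no Siegel zero to worry about, as observed after its statement) forbids zeros with $\lambda_\rho < \lambda_0 := c_1/\log\qq(iT^{\ast},\xi_E^k)$, and \eqref{anaconda} gives $\lambda_0 \gg 1/((4+\delta+\varepsilon)\log x)$. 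The contribution from $\lambda_\rho \ge c_3$ is handled directly by Lemma~\ref{lem:vdz}, which bounds it by $\ll x^{-(A-2)c_3}MT^{\ast}\log x = o(1)$ once $A$ is large. For $\lambda_\rho\in[\lambda_0,c_3)$, I bound the $\min$ factor by $1$ and perform Abel summation in $\lambda$ against the density $N(\lambda,T^{\ast})\le(T^{\ast})^{c_4\lambda}$ from Lemma~\ref{lem:lfzde}:
\begin{equation*}
\sum_\rho |K(\rho-1)|^2 \;\le\; (A-2)(\log x)\!\int_{\lambda_0}^{c_3}\!\Bigl(\tfrac{(T^{\ast})^{c_4}}{x^{A-2}}\Bigr)^{\!\lambda}\!d\lambda \;+\; o(1).
\end{equation*}
Setting $\mu := (A-2)-(2+\delta)c_4$ and choosing $A$ large enough that $\mu>0$, the integral equals $\tfrac{1}{\mu\log x}(x^{-\mu\lambda_0}-x^{-\mu c_3})\le\tfrac{1}{\mu\log x}\exp(-\mu c_1/(4+\delta+\varepsilon))$, so the bulk contribution is at most $\tfrac{A-2}{\mu}\exp(-\mu c_1/(4+\delta+\varepsilon))+o(1)$.

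The main obstacle --- and the only reason the statement can demand the specific constant $\tfrac{9}{10}$ --- is that as $A\to\infty$ the polynomial prefactor $(A-2)/\mu$ tends to $1$ while the exponential saving $\exp(-\mu c_1/(4+\delta+\varepsilon))$ tends to $0$. Choosing $A$ sufficiently large (but absolute and effectively computable) thus forces the right-hand side below any prescribed positive constant, and in particular below $\tfrac{9}{10}$. No other step of the argument involves the constant $\tfrac{9}{10}$ specifically; every subsidiary estimate is already $o(1)$ as $x\to\infty$.
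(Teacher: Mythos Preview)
Your proof is correct and follows essentially the same route as the paper: the same three-way split (large $|\gamma|$ handled by the $|s|^{-2}$ part of the kernel bound together with Lemma~\ref{lem:vdz}; $\lambda\ge c_3$ handled by the crude bound $x^{-(A-2)c_3}$ together with Lemma~\ref{lem:vdz}; $\lambda\in[\lambda_0,c_3)$ handled by partial summation against the log-free density estimate of Lemma~\ref{lem:lfzde}, with $\lambda_0$ supplied by the Siegel-zero-free region of Lemma~\ref{lem:zfr}), and the same endgame of sending $A\to\infty$ so that the main term $\frac{A-2}{\mu}\exp(-c\mu)$ becomes as small as desired. The only differences are cosmetic: the paper takes $T=M^2=x^{2+2\varepsilon}$ rather than your $T^\ast=x^{2+\delta}$, sums over unit windows rather than dyadic blocks for the tail, and keeps both boundary terms in the Stieltjes integration (yielding the prefactor $\tfrac{B_1+B_2}{B_2}$) whereas you absorb the upper boundary term into $o(1)$.
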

\begin{proof}
   First, notice that since $\xi_E^k$ has infinite order for any $k \neq 0$, we may apply Lemma~\ref{lem:zfr} without consideration of Siegel zeros. Now, let $B_1 = A - 2$ and $M = x^{1+\eps}$ as before, set $T = M^2 =x^{2+2\eps}$, and let $B_2 = B_1 - (2+2\eps)c_4$ (see Lemma~\ref{lem:lfzde} for the definition of $c_4$) and assume $B_2 > 0$ (by selecting $A$ large enough).
    We begin by computing the following Stieltjes integral over $\lambda$
    using the bounds given by Lemmas~\ref{lem:lfzde} and~\ref{lem:kernel}, the former of which will apply if we take $x$ sufficiently large so that $T > \Norm(\mm)(M+1)$:
    \begin{align}
        \int_a^b x^{-B_1\lambda} \; dN(\lambda, T)
        &= \left.x^{-B_1\lambda} N(\lambda, T)\right|_a^b
        + B_1\log x \int_a^b N(\lambda, T) x^{-B_1\lambda} \; d\lambda \nonumber \\
        &\le \left\lvert x^{-B_1a} \cdot T^{c_4a} \right\rvert +  \left\lvert x^{-B_1b} \cdot T^{c_4b} \right\rvert
        + \left\lvert B_1\log x \cdot \int_a^b T^{c_4\lambda} x^{-B_1\lambda} \; d\lambda \right\rvert \nonumber\\
        &= x^{-B_2a} + x^{-B_2b}
        + B_1\log x \left\lvert \int_a^b x^{-B_2\lambda} d\lambda \right\rvert \nonumber\\
        &\leq \frac{B_1 + B_2}{B_2} \left(x^{-B_2a} + x^{-B_2b}\right). \label{brumly}
    \end{align}
    We will now bound the contribution of zeros in the rectangle defined by $1 - c_3 < \beta < 1$ and $|\gamma| < T$ using (\ref{brumly}). We first need to choose $a,b$ appropriately. Applying the zero-free region stated in Lemma~\ref{lem:zfr} to $L(s, \xi_E^k)$, we can pick
        \[ a = \frac{c_1}{\log \qq(iT, \xi_E^k)} \quad\text{and}\quad b \to \infty. \]
    Given our choices of $T$ and $M$ as well as the fact that $|k| \leq M$, we deduce from (\ref{anaconda}) $\log \qq(iT, \xi_E^k) < C\log x$ for some absolute constant $C > 0$. Substituting these choices of $a,b$ into (\ref{brumly}), we find that for sufficiently large $x$, the contribution of zeros in this rectangle
    is at most $B_3$, where
    \begin{equation}
        B_3 \defeq \frac{B_1 + B_2}{B_2}
        \left( \exp\left(-\frac{B_2c_1}{C}\right) \right).
        \label{eq:sub_one}
    \end{equation}
    Next, we bound the contribution of zeros in the rectangle $0 < \beta < 1 - c_3$
    and $\left\lvert \gamma \right\rvert < T$; we show that it yields a negligible contribution of $o(1/x)$.
    The contribution of each zero with $\beta < 1 - c_3$ is at most $x^{-B_1c_3}$
    by Lemma~\ref{lem:kernel}.
    Therefore, if we sum over zeros in vertical strips
    $[t-1,t+1]$ for $t = 0, 1, \dots, T$
    and appeal to Lemmas~\ref{lem:vdz} and~\ref{lem:kernel},
    we obtain the bound
    \begin{align*}
        \sum_{|k| \leq M} \sum_{\substack{0 < \beta < 1-c_3 \\ |\gamma| < T}}
        \left\lvert K(\rho-1) \right\rvert^2
        &\ll M \cdot \sum_{t=0}^T x^{-B_1c_3} \log x \\
        &\ll MT x^{-B_1c_3} \log x \\
        &\ll x^{3+3\eps-B_1c_3} \log x,
    \end{align*}
    which is $o(1/x)$ as long as $B_1 = A-2$ is sufficiently large. Finally, we will show that the contribution of zeros with $|\gamma| \ge T$ is also negligible.
By Lemmas~\ref{lem:vdz} and~\ref{lem:kernel}, this contribution is
\begin{align*}
	\sum_{|k| \le M} {\sum_{\rho}}^k \frac{4x^{-B_1(1-\beta)}}{\left\lvert \rho-1 \right\rvert^2 (\log x)^2}
	&\ll \sum_{|k| \le M} {\sum_{\rho}}^k \frac{1}{\left\lvert \rho-1 \right\rvert^2 (\log x)^2} \\
	&\ll \frac{1}{(\log x)^2} \sum_{|k| \leq M } \sum_{t > T} \frac{\log(k t)}{t^2} \\
	&\ll \frac{M \log T}{T(\log x)^2},
\end{align*}
	which is $o(1/x)$. In the last step above, we used the fact that $k \leq M \leq T$ and bounded the sum over $t$ with an integral. To obtain the lemma, we simply need to select $A$ in such a way that $B_3 < 9/10$, which is possible because $B_3$ can be made arbitrarily small by taking $A $ sufficiently large.
\end{proof}

\subsection{Completing the Proof}
For convenience, put $\tau = \frac{\beta-\alpha}{\pi} \le 1$.
Observing that $\frac{2}{M+1} = O(x^{-1-\eps}) =  o(x^{-1})$ and recalling our bound on $S$, we see that
\begin{eqnarray*}
    S &\ge &\tau - \sum_{|k| \le M} b_k {\sum_{\rho}}^k K(\rho-1)^2 + o(x^{-1}) \\
	&\ge & \tau - \left( \tau + \frac{2}{M+1} \right)
	 \sum_{|k|\le M} {\sum_{\rho}}^k |K(\rho-1)|^2
	- o(x^{-1}) \\
	&\ge &\tau - \left( \tau + \frac{2}{M+1} \right)
	\left( \frac{9}{10} + o(x^{-1}) \right)
	- o(x^{-1})\\
	&\ge & \frac{1}{10}\tau - o( x^{-1} ).
\end{eqnarray*}
As $x = \pi\Norm(\mm)/\tau$, it follows that $S > 0$ for $x$ sufficiently large.
Using our definitions of $S$ in \eqref{eq:reeq} and $R$ in \eqref{eq:rawr}, it follows that there exists a $\pp$ such that $f_\pp = 1$, $\theta_\pp \in [\alpha, \beta]$
and $\Norm(\pp) \in [x^{A-2}, x^A]$. Since $f_\pp = 1$, we can write $\pp=(p)$ for a rational prime $p$. We then have that $\theta_\pp = \theta_p$, from which we deduce that
\[ \theta_p = \theta_\pp \in [\alpha, \beta] \quad\text{and}\quad p \le x^A. \]
This completes the proof of the main result, Theorem~\ref{thm:main_simple}.

\begin{remark}
Notice that if a rational prime $p$ is inert in $\OO_K$, then $a_E(p) = 0$, so that $\theta_p = \pi/2$. Thus, whenever $\pi/2 \in I$, we have that all inert primes $p \nmid N_E$ satisfy $\theta_p \in I$.
Thus, in this case, the bound in Theorem~\ref{thm:main_simple} can be improved substantially. In particular, the bound no longer depends on the length $\beta - \alpha$ of the interval $I$.
\end{remark}

\section*{Acknowledgments}

\noindent This research was supervised by Ken Ono at the Emory University Mathematics REU and was supported by the National Science Foundation (grant number DMS-1250467). We would like to thank Ken Ono and Jesse Thorner for offering their advice and guidance and for providing many helpful discussions and valuable suggestions on the paper. We would also like to thank Professor Jean-Pierre Serre for pointing us to the reference~\cite{lithuaniastrikesagain}.

\bibliographystyle{amsxport}
\bibliography{biblio}

\end{document}